\documentclass[12pt,reqno]{amsart}
\usepackage[left=2.5cm, right=2.5cm, top=2.5cm, bottom=2.5cm]{geometry}
\usepackage{amssymb}
\usepackage{mathtools}
\usepackage{thmtools}
\usepackage{thm-restate}

\usepackage[shortlabels]{enumitem}

\usepackage{mathrsfs}
\usepackage{bbm}
\usepackage{bm}

\usepackage{url}



\usepackage{nag}

\usepackage{ulem}

\usepackage[colorlinks=true,allcolors=blue]{hyperref}

\renewcommand{\subset}{\subseteq}


\newtheorem{theorem}            {Theorem}[section]
\newtheorem{corollary}          [theorem]{Corollary}
\newtheorem{proposition}        [theorem]{Proposition}
\newtheorem{lemma}              [theorem]{Lemma}
\theoremstyle{definition}
\newtheorem{definition}         [theorem]{Definition}
\newtheorem{remark}         [theorem]{Remark}
\newtheorem{example}            [theorem]{Example}


\newcommand{\set}[1]{{ \left\{ #1 \right\} }}

\renewcommand{\Re}[1]{{\operatorname{Re}#1}}
\renewcommand{\Im}[1]{{\operatorname{Im}#1}}
\newcommand{\ora}[1]{{\overrightarrow{#1}}}
\newcommand{\Dom}[1]{{\operatorname{Dom}{#1}}}


\def\cI{ {\mathcal I} }

\def\cN{ {\mathcal N} }

\def\cU{\mathcal U}


\newcommand{\sF}{\mathscr{F}}
\newcommand{\sG}{\mathscr{G}}
\newcommand{\sU}{\mathscr{U}}



\newcommand{\ep}{{\varepsilon}}

\newcommand{\vf}{{\varphi}}

\def\C{{\mathbb C}}

\def\RR{{\mathbb R}}
\def\ZZ{{\mathbb Z}}

\def\bbS{{\mathbb S}}

\newcommand{\bbx}{\mathbbm{x}}

\newcommand{\bbh}{\mathbbm{h}}

\newcommand{\x}{\bm{x}}

\newcommand{\h}{\bm{h}}



\newcommand{\wt}[1]{\widetilde{#1}}


\newcommand{\fralg}[1]{\langle #1 \rangle}
\makeatletter
\def\moverlay{\mathpalette\mov@rlay}
\def\mov@rlay#1#2{\leavevmode\vtop{
                \baselineskip\z@skip \lineskiplimit-\maxdimen
                \ialign{\hfil$#1##$\hfil\cr#2\crcr}}}
\makeatother

\newcommand{\plangle}{\moverlay{(\cr<}}
\newcommand{\prangle}{\moverlay{)\cr>}}
\newcommand{\skf}[1]{\plangle #1 \prangle}

\makeatletter
\DeclareFontFamily{OMX}{MnSymbolE}{}
\DeclareSymbolFont{MnLargeSymbols}{OMX}{MnSymbolE}{m}{n}
\SetSymbolFont{MnLargeSymbols}{bold}{OMX}{MnSymbolE}{b}{n}
\DeclareFontShape{OMX}{MnSymbolE}{m}{n}{
    <-6>  MnSymbolE5
   <6-7>  MnSymbolE6
   <7-8>  MnSymbolE7
   <8-9>  MnSymbolE8
   <9-10> MnSymbolE9
  <10-12> MnSymbolE10
  <12->   MnSymbolE12
}{}
\DeclareFontShape{OMX}{MnSymbolE}{b}{n}{
    <-6>  MnSymbolE-Bold5
   <6-7>  MnSymbolE-Bold6
   <7-8>  MnSymbolE-Bold7
   <8-9>  MnSymbolE-Bold8
   <9-10> MnSymbolE-Bold9
  <10-12> MnSymbolE-Bold10
  <12->   MnSymbolE-Bold12
}{}

\let\llangle\@undefined
\let\rrangle\@undefined
\DeclareMathDelimiter{\llangle}{\mathopen}%
                     {MnLargeSymbols}{'164}{MnLargeSymbols}{'164}
\DeclareMathDelimiter{\rrangle}{\mathclose}%
                     {MnLargeSymbols}{'171}{MnLargeSymbols}{'171}
\makeatother

\newcommand{\fpsx}{\llangle \bbx \rrangle}


\def\curl{\operatorname{curl}}


\newcommand{\GL}{\mathrm{GL}}

\mathchardef\mhyphen="2D

\def\mccarthy{M\raise.5ex\hbox{c}Carthy}

\newcommand{\bsbm}{\left[ \begin{smallmatrix}}
\newcommand{\esbm}{\end{smallmatrix} \right]}

\newcommand{\bbm}{ \begin{bmatrix}}
\newcommand{\ebm}{\end{bmatrix} }

\newcommand{\bpm}{\begin{pmatrix}}
\newcommand{\epm}{\end{pmatrix}}

\newcommand{\bspm}{\left(\begin{smallmatrix}}
\newcommand{\espm}{\end{smallmatrix}\right)}

\newcommand{\bsm}{\begin{smallmatrix}}
\newcommand{\esm}{\end{smallmatrix}}

\newcommand{\bal}{\begin{align*}}
\newcommand{\eal}{\end{align*}}

\newcommand{\df}[1]{{\bf{#1}}{\index{#1}}}
\newcommand{\dft}[1]{{\textit{#1}}{\index{#1}}}

\title[]{Free Potential Functions}

\author{Meric L. Augat}

\numberwithin{equation}{section}

\begin{document}

\begin{abstract}
This article establishes free versions of two classical theorems: derivatives are curl-free and every curl-free vector field (on a 
simply connected domain) is a derivative.
We show that the derivative of a noncommutative free analytic map must be free-curl free -- an analog of having zero curl. Moreover, under 
the assumption that the free domain is connected, this necessary condition is sufficient. Specifically, if $T$ is analytic free vector field 
defined on a connected free domain then $DT(X,H)[K,0] = DT(X,K)[H,0]$ if and only if there exists an analytic free map $f$ such that 
$Df(X)[H] = T(X,H)$.
\end{abstract}

\maketitle

\thispagestyle{empty}

\section{Introduction}

The free derivative is a pervasive tool in free analysis that provides deep insight into many interesting problems; recent papers discussing 
the free derivative provide stronger results than their classical counterparts \cite{Pas14,Aug18}.
In particular, the free inverse function theorem has been studied in a variety of settings \cite{Pas14,AK-V15,AM16,Man20}.

This paper answers the question {\it when is a free map the derivative of a free analytic function?}
While a resolution to the question can be found in \cite{K-VV14}, or more recently, \cite{K-VSV20}, the techniques used therein will be less 
familiar with a non-specialist of free analysis.
This article, on the other hand, is set up to be an analogue of two very well-known theorems in analysis: derivatives are curl-free and a 
curl-free vector field (on a simply connected domain) is a derivative.
The statements of our main theorems, their proofs, and even the definitions used within are natural generalizations of their classical 
counterparts.
By following a well-known proof arc, this article provides a satisfying proof that is accessible to both veterans and neophytes of free 
analysis.

\subsection{Preliminaries}

Throughout the paper we fix $g,h\in \ZZ^+$ and let $M(\C)^g = (M_n(\C)^g)_{n=1}^\infty.$
 It serves as our {\it universal} free set.
A \df{subset} $\Omega\subseteq M(\C)^g$ is sequence $\Omega = (\Omega[n])_{n=1}^\infty,$ where $\Omega[n]\subset M_n(\C)^g.$ 
 We  say $\Omega$ is a \df{free set} if  
\begin{enumerate}
	\item $X\oplus Y = (X_1\oplus Y_1, \dots, X_g\oplus Y_g) 
		= (\bspm X_{1\vphantom{g}} & 0 \\ 0 & Y_{1\vphantom{g}} \espm, \dots, \bspm X_g & 0 \\ 0 & Y_g \espm) \in \Omega[n+m]$;
	\item $S^{-1}XS = (S^{-1}X_1 S, \dots, S^{-1}X_g S)\in \Omega[n];$
\end{enumerate}
for all $m,n\in \ZZ^+$ and 
   $X = (X_1,\dots, X_g)\in \Omega[n]$, $Y = (Y_1,\dots, Y_g)\in \Omega[m]$ and $S\in \GL_n(\C)$.
If $\Omega$ is a free set then $\Omega \times M(\C)^g$ is defined to be the free set $(\Omega[n]\times M_n(\C)^g)_{n=1}^\infty$.

A free set $\Omega$ is said to be a \df{free domain} if each $\Omega[n]$ is open. 
We note that while our definition requires $\Omega$ to be closed under simultaneous conjugation by similarities, in certain settings it is 
desirable to assume the weaker condition of $\Omega$ being closed under simultaneous conjugation by unitaries, see \cite{JKMMP19,HKMV}.

If $f = (f[n])_{n=1}^\infty$ where $f[n]:\Omega[n]\to M_n(\C)^h$, then we write $f:\Omega\to M(\C)^h$.
If, in addition,
\begin{enumerate}
	\item $f(X\oplus Y) = \bpm f(X) & 0 \\ 0 & f(Y) \epm$
	\item $f(S^{-1}XS) = S^{-1}f(X)S$
\end{enumerate}
whenever $X = (X_1,\dots, X_g)\in \Omega[n]$, $Y = (Y_1,\dots, Y_g)\in \Omega[m]$ and $S\in \GL_n(\C)$ then $f$ is a \df{free map}.
A free map is \df{continuous} if each $f[n]$ is continuous and \df{analytic} if each $f[n]$ is analytic.
When $f$ is analytic, then its \df{nc directional derivative} at $X\in \Omega[n]$ in the direction of $H\in M_n(\C)^g$ is
\[
	Df(X)[H] = \lim_{z\to 0} \frac{f(X+zH) - f(X)}{z}.
\]

A basic result of free analysis \cite{HKM11,K-VV14} says a continuous free map on a free domain is analytic and 
\[
	f\bpm X & H \\  0  & X\epm = \bpm f(X) & Df(X)[H] \\ 0 & f(X) \epm.
\]

The nc directional derivative is a Fr{\' e}chet derivative, hence it is linear in the direction of the derivative.
By virtue of a few simple block matrix computations, the derivative map $Df:\Omega\times M(\C)^g\to M(\C)^h$ is easily seen to be free 
analytic and is linear in its second set of coordinates (such a map will be called a \dft{free vector field}). 
On the other hand, if we are given an analytic free vector field then we can ask whether it is the derivative of a free map.
This question is akin to asking when a given smooth vector field is the gradient of a smooth function.

Our two main results provide necessary and sufficient conditions for an analytic free vector field to be the derivative of an analytic free 
map. 
The first is a free analog of the Clairaut-Schwarz Theorem on the equality of mixed partial derivatives:

\begin{restatable*}[Free Clairaut-Schwarz Theorem]{theorem}{clairaut}
	\label{thm:free Clairaut}
	Suppose $\Omega$ is a free domain and $f:\Omega\to M(\C)^h$ is free analytic.
	If $F:\Omega\times M(\C)^g\to M(\C)^{h}$ is defined by $F(X,H) = Df(X)[H]$, then 
        $F$ is an analytic free vector field and {free curl-free}: 
	$DF(X,H)[K,L] = DF(X,K)[H,L]$ for all $X\in \Omega$ and $H,K,L\in M(\C)^g$.
\end{restatable*}

The second theorem guarantees the existence of a potential function when the analytic free vector field is free-curl free:

\begin{restatable*}{theorem}{potential}
	\label{thm:free-curl free means potential}
	Suppose $\Omega$ is a connected free domain and $T$ is an analytic free vector field on $\Omega\times M(\C)^g$.
	Then $T$ is free-curl free if and only if there exists a free analytic function $f$ on $\Omega$ such that $T(X,H) = Df(X)[H]$ for all 
	$X\in \Omega$ and $H\in M(\C)^g$.
\end{restatable*}

See \cite{Ste18,K-VSV20} for related results for difference-differential operators from a different perspective.

Finally, the proof of Theorem~\ref{thm:free-curl free means potential} requires several pieces, of which the following is of independent 
interest.

\begin{restatable*}{proposition}{similarity}
	\label{prop:analytic and unitary gives similarity}
	Suppose $\Gamma\subset M_n(\C)^g$ is nonempty, open and $\Gamma$ is similarity invariant.
	If $\beta:\Gamma\to M_n(\C)^h$ is analytic and respects conjugation by unitaries then $\beta$ respects conjugation by similarities.
\end{restatable*}

\subsection{Acknowledgments}

The author would like to thank Scott McCullough for his support throughout the writing of this paper, Robert Martin for kindly providing 
Theorem~\ref{thm:nc pluriharmonic conj} and its proof, and John {\mccarthy} for his valuable comments and suggestions as well as for asking 
the initial question that led to this paper.

\section{Necessity}

The first classical result investigated is the Clairaut-Schwarz Theorem on the equality of mixed partial derivatives: if $\phi$ is any scalar-valued $\mathcal{C}^2$ function, then $\curl(\nabla \phi) = 0$.
Our proof, below, of the free analog of this classical result relies only on our domain being closed under direct sums and \dft{unitary} conjugation, rather than direct sums and conjugation by similarities.

\begin{definition}
	Suppose $T:\Omega\times M(\C)^g\to M(\C)^h$. The map $T$ is said to be a \df{free vector field} if $T$ is free as a 
	function from $\Omega\times M(\C)^g$ to $M(\C)^h$ and $T$ is linear in its second coordinates: $T(X,cH+ K) = cT(X,H) + 
	T(X,K)$ for all $X\in \Omega$, $H,K\in M(\C)^g$ and $c\in \C$.

	Suppose that in addition to being a free vector field, $T$ is also analytic. 
	Then the \df{free-curl} of $T$ is the difference $DT(X,H)[K,0] - DT(X,K)[H,0]$ (for each $X$ we get a map $(H,K)\mapsto 
	DT(X,H)[K,0] - DT(X,K)[H,0]$).
	If the free-curl is zero for each $X\in \Omega$, then $T$ is said to be \df{free-curl free}.
\end{definition}

\begin{example}
	Let $T:\Omega\times M(\C)^g\to M(\C)^h$ with $T(X,H) = XH - HX$ and note that $T$ is a free vector field.
	In order for $T$ to be the derivative of a free map, we need $XH - HX = 0$ for all $X\in \Omega$ and $H\in M(\C)^g$.
	Hence our domain must only contain points $X$ such that $XH = HX$ for all $H$, that is $X$ is always a scalar matrix. However, the set of scalar matrices is not open and on this set, $XH - HX$ is identically zero.
\end{example}

\begin{lemma}
	\label{lem:curl with or without L}
	Suppose $\Omega$ is a free domain. 
	If $T$ is an analytic free vector field on $\Omega\times M(\C)^g,$ then $DT(X,H)[K,0] = DT(X,K)[H,0]$ if and 
	only if $DT(X,H)[K,L] = DT(X,K)[H,L]$ for all $L\in M(\C)^g$.
\end{lemma}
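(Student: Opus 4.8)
The plan is to reduce everything to a single observation: the directional derivative of $T$ in its second coordinate is just $T$ itself, evaluated at the direction. First I would invoke linearity of the Fr\'echet derivative in its direction to split
\[
	DT(X,H)[K,L] = DT(X,H)[K,0] + DT(X,H)[0,L]
\]
for all $X\in \Omega[n]$ and $H,K,L\in M_n(\C)^g$. This separates the ``horizontal'' derivative (in the base point $X$) from the ``vertical'' one (in the second coordinate).

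Next I would compute the vertical piece directly from the definition of the nc directional derivative. Since $T$ is free demilinear, it is linear in its second set of variables, so $T(X,H+zL) = T(X,H) + z\,T(X,L)$ and hence
\[
	DT(X,H)[0,L] = \lim_{z\to 0} \frac{T(X,H+zL) - T(X,H)}{z} = T(X,L).
\]
The key feature is that this expression depends only on $X$ and $L$, not on $H$.

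Finally I would form the free-curl with a general $L$ and substitute the two computations. Because the term $T(X,L)$ is the same whether the second argument is $H$ or $K$, it cancels in the difference:
\[
	DT(X,H)[K,L] - DT(X,K)[H,L] = DT(X,H)[K,0] - DT(X,K)[H,0].
\]
Thus the curl evaluated at direction $L$ equals the curl evaluated at direction $0$ for every $L$, and the claimed equivalence is immediate in both directions.

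The only point requiring care is the decomposition in the first step, which rests on the analytic map $T$ being Fr\'echet differentiable so that $DT(X,H)[\,\cdot\,]$ is genuinely linear in the direction $(K,L)$; once this is granted there is no real obstacle, as the remaining steps merely unwind demilinearity. The conceptual content of the lemma is precisely that the $L$-dependence of the full curl is carried entirely by a term that is insensitive to which slot ($H$ or $K$) it occupies, and therefore washes out upon subtraction.
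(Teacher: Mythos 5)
Your proof is correct and follows essentially the same route as the paper's: both split $DT(X,H)[K,L]$ by linearity of the Fr\'echet derivative into the pieces $DT(X,H)[K,0]$ and $DT(X,H)[0,L]$, use demilinearity to identify $DT(X,H)[0,L] = T(X,L)$, and exploit that this term is independent of the slot occupied by $H$ or $K$. The only cosmetic difference is that you phrase the conclusion as a cancellation in the difference of curls, while the paper writes it as a chain of equalities; the content is identical.
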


\begin{proof}
	Suppose $DT(X,H)[K,0] = DT(X,K)[H,0]$ and let $L$ be given. Since $T$ is a free vector field we immediately observe that 
	$DT(X,H)[0,L] = T(X,L)$.
	Thus,
	\begin{align*}
		DT(X,H)[K,L] 
			&= DT(X,H)[K,0] + DT(X,H)[0,L] = DT(X,K)[H,0] + T(X,L)\\
			&= DT(X,K)[H,0] + DT(X,K)[0,L] = DT(X,K)[H,L].
	\end{align*}
	The other direction is obtained immediately by choosing $L = 0$.
\end{proof}

\clairaut

\begin{proof}
	It is sufficient to show $DF(X,H)[K,0] = DF(X,K)[H,0]$ by Lemma~\ref{lem:curl with or without L}.
	Recall that for any free analytic map we have
	\[
		f\bpm X & H \\ 0 & X \epm = \bpm f(X) & F(X,H) \\ 0 & f(X) \epm.
	\]
	Since $F$ is free analytic,
	\[
		F\bpm (X,H) & (K,0) \\ (0,0) & (X,H) \epm = \bpm F(X,H) & DF(X,H)[K,0] \\ 0 & F(X,H) \epm.
	\]
	In particular,
	\begin{align*}
		\bpm F(X,H) & DF(X,H)[K,0] \\ 0 & F(X,H) \epm 
			&= F\bpm (X,H) & (K,0) \\ (0,0) & (X,H) \epm = F\left(\bpm X & K \\ 0 & X \epm, \bpm H & 0 \\ 0 & H\epm\right) \\
			&= Df\bpm X & K \\ 0 & X \epm \bbm H & 0 \\ 0 & H \ebm.
	\end{align*}
	Thus,
	\begin{align*}
		f\bpm 
			X & K & H & 0 \\ 
			0 & X & 0 & H \\
			0 & 0 & X & K \\
			0 & 0 & 0 & X
		\epm
		&= \bpm 
			\begin{matrix}
				f(X) & F(X,K) \\ 0 & f(X) \\ \vspace{-1em} \\ 0 & 0 \\ 0 & 0
			\end{matrix}
			& \begin{matrix} 
				Df\bpm X & K \\ 0 & X \epm\bbm H & 0 \\ 0 & H \ebm
			\\ \vspace{-1em} \\
			\begin{matrix}
				f(X) & F(X,K) \\ 0 & f(X)
			\end{matrix}
			\end{matrix}
		\epm \\
		&= \bpm 
				f(X) & F(X,K) & F(X,H) & DF(X,H)[K,0] \\ 0 & f(X) & 0 & F(X,H) \\ 0 & 0 & f(X) & F(X,K) \\ 0 & 0 & 0 & f(X)
			\epm.
	\end{align*}
	Moreover, letting
	\[
		U = 
		\bpm 
			1 & 0 & 0 & 0 \\ 
			0 & 0 & 1 & 0 \\ 
			0 & 1 & 0 & 0 \\ 
			0 & 0 & 0 & 1 
		\epm
	\]
	we see that
	\[
		\bpm 
			X & K & H & 0 \\ 
			0 & X & 0 & H \\
			0 & 0 & X & K \\
			0 & 0 & 0 & X
		\epm
		=
		U
		\bpm 
			X & H & K & 0 \\ 
			0 & X & 0 & K \\
			0 & 0 & X & H \\
			0 & 0 & 0 & X
		\epm	
		U^{-1}
	\]
	and the free structure of $f$ implies
	\[
		f\bpm 
			X & K & H & 0 \\ 
			0 & X & 0 & H \\
			0 & 0 & X & K \\
			0 & 0 & 0 & X
		\epm
		=
		U f\bpm 
			X & H & K & 0 \\ 
			0 & X & 0 & K \\
			0 & 0 & X & H \\
			0 & 0 & 0 & X
		\epm
		U^{-1}.
	\]
	Since
	\[
		f\bpm 
			X & K & H & 0 \\ 
			0 & X & 0 & H \\
			0 & 0 & X & K \\
			0 & 0 & 0 & X
		\epm
		=
		\bpm 
			f(X) & F(X,K) & F(X,H) & DF(X,H)[K,0] \\ 0 & f(X) & 0 & F(X,H) \\ 0 & 0 & f(X) & F(X,K) \\ 0 & 0 & 0 & f(X)
		\epm		
	\]
	and
	\begin{align*}
		U f\bpm 
			X & H & K & 0 \\ 
			0 & X & 0 & K \\
			0 & 0 & X & H \\
			0 & 0 & 0 & X
		\epm
		U^{-1}
		=
		U&\bpm 
			f(X) & F(X,H) & F(X,K) & DF(X,K)[H,0] \\ 0 & f(X) & 0 & F(X,K) \\ 0 & 0 & f(X) & F(X,H) \\ 0 & 0 & 0 & f(X)
		\epm U^{-1}\\
		=&\bpm 
			f(X) & F(X,K) & F(X,H) & DF(X,K)[H,0] \\ 0 & f(X) & 0 & F(X,H) \\ 0 & 0 & f(X) & F(X,K) \\ 0 & 0 & 0 & f(X)
		\epm			
	\end{align*}
	we conclude $DF(X,H)[K,0] = DF(X,K)[H,0]$.
	Therefore, for all $X\in \Omega$ and $H,K,L\in M(\C)^g$ we have $DF(X,H)[K,L] = DF(X,K)[H,L]$, as desired.
\end{proof}

\begin{corollary}
	Suppose $T$ is a free vector field on $\Omega\times M(\C)^g$.
	If there exist $X\in \Omega$ and $H,K\in M(\C)^g$ such that $DT(X,H)[K,0]\neq DT(X,K)[H,0]$, then $T$ is not the derivative of an 
	analytic free map on $\Omega$.
\end{corollary}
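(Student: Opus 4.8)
The plan is to obtain this as the immediate contrapositive of the Free Clairaut--Schwarz Theorem (Theorem~\ref{thm:free Clairaut}). I would argue by contradiction: suppose $T$ \emph{is} the derivative of an analytic free map, so that there is a free analytic $f : \Omega \to M(\C)^h$ with $T(X,H) = Df(X)[H]$ for all $X \in \Omega$ and $H \in M(\C)^g$.

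The key observation is that this $T$ is precisely the map denoted $F$ in Theorem~\ref{thm:free Clairaut}, namely the map given by $F(X,H) = Df(X)[H]$. I would therefore apply that theorem with $F = T$ to conclude that $T$ is analytic, free demilinear, and free-curl free; explicitly, $DT(X,H)[K,L] = DT(X,K)[H,L]$ for all $X \in \Omega$ and all $H, K, L \in M(\C)^g$. Specializing to $L = 0$ then yields $DT(X,H)[K,0] = DT(X,K)[H,0]$ for \emph{every} $X \in \Omega$ and \emph{every} $H, K \in M(\C)^g$.

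This equality holding at all triples $(X,H,K)$ contradicts the standing hypothesis that it fails at some particular $(X,H,K)$. Hence no analytic free primitive $f$ can exist, which is exactly the assertion that $T$ is not the derivative of an analytic free map on $\Omega$. Note that, unlike the potential-function result (Theorem~\ref{thm:free-curl free means potential}), no connectedness assumption on $\Omega$ is needed here, since Theorem~\ref{thm:free Clairaut} itself imposes none.

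Because the statement is the logical contrapositive of a result already in hand, there is no substantive obstacle to overcome. The only point demanding attention is the identification of the hypothesized primitive's derivative map with the map $F$ of Theorem~\ref{thm:free Clairaut}: once this is made, the free-curl-free conclusion applies verbatim and, specialized to $L=0$, directly contradicts the assumed asymmetry at the witnessing triple. (One could equally appeal to Lemma~\ref{lem:curl with or without L} to pass between the $L=0$ and general-$L$ formulations, but this is not strictly necessary since Theorem~\ref{thm:free Clairaut} already delivers the identity for all $L$.)
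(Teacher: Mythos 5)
Your proposal is correct and matches the paper exactly: the paper's entire proof reads ``This is the contrapositive of Theorem~\ref{thm:free Clairaut},'' and your argument is simply that contrapositive spelled out (with the harmless extra step of specializing $L=0$, which the theorem's conclusion already permits). Nothing further is needed.
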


\begin{proof}
	This is the contrapositive of Theorem~\ref{thm:free Clairaut}.
\end{proof}

Theorem~\ref{thm:free Clairaut} gives us a clean necessary condition for an analytic free vector field to be the derivative of a free 
analytic map.

\begin{example}
	Once more, let us set $T(X,H) = XH - HX$. Since
	\[
		DT(X,H)[K,0] = KH - HK \neq HK - KH = DT(X,K)[H,0]
	\]
	on any open free set, we see that $T$ is not the derivative of an analytic free map.	
\end{example}

\begin{example}
	Similarly, in the classical setting $(-y,x)$ is a standard example of a non-conservative vector field. 
	In our case choosing the free vector field $T(X_1,X_2,H_1,H_2) = X_1H_2 - H_1X_2$, we see that
	\[
		DT(X,H)[K,0] = K_1H_2 - H_1K_2 \neq H_1K_2 - K_1H_2 = DT(X,K)[H,0]
	\]
	unless $K_1H_2 = H_1K_2$ for all $K,H\in M(\C)^2$ -- a preposterous idea!
	Thus, $T$ is not the derivative of an analytic free map.
\end{example}

\begin{remark}
	As mentioned above, the fact that the proof of Theorem~\ref{thm:free Clairaut} relies only on $\Omega$ being open and invariant under 
	direct sums and unitary conjugation implies that it can be extended to the operator NC setting.
	However, the sufficiency proofs rely on our domain being closed under conjugation by similarities as well as evaluations on finite operators.	
\end{remark}

\begin{remark}
	Many of the results found in this section ostensibly could be obtained by restricting a free function to a fixed level $n$ and 
	identifying it 	as an $n^2h$ tuple of analytic maps in $n^2g$ commuting variables.
	The classical theory then applies and gives us necessary and sufficient condition for the existence of a scalar potential function. 
	However, this misses the free forest for the classical trees.
\end{remark}

\section{Sufficiency}
In the classical setting, if $\curl(\Phi) = 0$ on a simply connected domain, then there exists a scalar potential function for $\Phi$.
In outline, the proof of this fact proceeds by showing 
 that curl free implies path independent (on a simply connected domain)
 to guarantee that  a potential function constructed 
via line integrals from an {\it anchor} point in the domain is well defined
 (independent of the choice of path).

In the free setting  free-curl free implies path independent regardless of the geometry
 of the domain $\Omega.$ Proving that the natural candidate
 for a potential function is a free function requires some care.

\begin{definition}
	Suppose $\Omega$ is a free domain and suppose $T$ is an analytic free vector field on $\Omega\times M(\C)^g$.
	For any smooth path $\gamma:[0,1]\to \Omega[n]$ the entries of $T(\gamma(t),\gamma'(t))$ are analytic functions of $t$, 
	and we define 
	\[
		\mathcal{I}(T,\gamma) := \int_0^1 T(\gamma(t),\gamma'(t)) \, dt
	\]
	to be the result of applying the integral entry-wise.{\footnote{
	Any smooth path will be bounded away from the boundary of $\Omega[n]$, hence the integrals defined above 
	will have no convergence issues.}}
	
	We say $T$ is \df{path independent} if for all $n\in \ZZ^+$, whenever $\gamma_1,\gamma_2:[0,1]\to 
	\Omega[n]\times M_n(\C)^g$ are smooth, $\gamma_1(0) = \gamma_2(0)$ and $\gamma_1(1) = \gamma_2(1)$, then
	\[
		\mathcal{I}(T,\gamma_1) = 
                   \int_0^1 T(\gamma_1(t), \gamma_1'(t)) \, dt = \int_0^1 T(\gamma_2(t), \gamma_2'(t)) \, dt = \cI(T,\gamma_2).
	\]
\end{definition}

\begin{lemma}
	\label{lem:int is free}
	Suppose $\Omega$ is a free domain and $T$ is an analytic free vector field. If $\gamma:[0,1]\to \Omega[n]$ and $\eta:[0,1]\to \Omega[m]$ 
	are 
	smooth paths and $S\in \GL_n(\C)$ then
	\[
		\cI(T,\gamma \oplus \eta) = \cI(T,\gamma) \oplus \cI(T,\eta).
	\]
	and
	\[
		\cI(T,S^{-1}\gamma S) = S^{-1}\cI(T,\gamma)S.
	\]
\end{lemma}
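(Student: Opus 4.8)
The plan is to transport the two defining properties of a free map through the entrywise integral; no new idea beyond the freeness of $T$ is needed, so I expect the argument to be essentially bookkeeping. The one point to get right is the free structure of the \emph{product} free set $\Omega\times M(\C)^g$: writing a point as a pair $(X,H)$ with $X\in\Omega[n]$ and $H\in M_n(\C)^g$, its direct sum with $(X',H')\in\Omega[m]\times M_m(\C)^g$ is $(X\oplus X',\,H\oplus H')$ and its conjugate by $S\in\GL_n(\C)$ is $(S^{-1}XS,\,S^{-1}HS)$. Since $T$ is free from $\Omega\times M(\C)^g$ to $M(\C)^h$, this yields the two pointwise identities
\[
	T(X\oplus X',\,H\oplus H') = \bpm T(X,H) & 0 \\ 0 & T(X',H') \epm,
	\qquad
	T(S^{-1}XS,\,S^{-1}HS) = S^{-1}T(X,H)S.
\]

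For the direct sum, first I would observe that $\gamma\oplus\eta$, defined by $(\gamma\oplus\eta)(t)=\gamma(t)\oplus\eta(t)$, is a smooth path into $\Omega[n+m]$ (because $\Omega$ is closed under direct sums) and that its derivative is $\gamma'(t)\oplus\eta'(t)$, as direct sum is linear. Applying the first identity with $X=\gamma(t)$, $X'=\eta(t)$, $H=\gamma'(t)$, $H'=\eta'(t)$ gives, for each $t$,
\[
	T\big((\gamma\oplus\eta)(t),(\gamma\oplus\eta)'(t)\big)
	= \bpm T(\gamma(t),\gamma'(t)) & 0 \\ 0 & T(\eta(t),\eta'(t)) \epm.
\]
Integrating entrywise then preserves the block structure — the off-diagonal blocks are identically zero and integrate to zero — so $\cI(T,\gamma\oplus\eta)$ equals the block-diagonal matrix with diagonal blocks $\cI(T,\gamma)$ and $\cI(T,\eta)$, that is, $\cI(T,\gamma)\oplus\cI(T,\eta)$.

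For the conjugation identity, I would note that $S^{-1}\gamma S$, defined by $(S^{-1}\gamma S)(t)=S^{-1}\gamma(t)S$, is a smooth path into $\Omega[n]$ (because $\Omega$ is closed under conjugation by similarities) whose derivative is $S^{-1}\gamma'(t)S$, since $S$ is a constant matrix. Applying the second pointwise identity with $X=\gamma(t)$ and $H=\gamma'(t)$ gives $T(S^{-1}\gamma(t)S,\,S^{-1}\gamma'(t)S)=S^{-1}T(\gamma(t),\gamma'(t))S$ for every $t$, and because $S$ is constant it factors out of the entrywise integral:
\[
	\cI(T,S^{-1}\gamma S) = \int_0^1 S^{-1}T(\gamma(t),\gamma'(t))S\,dt = S^{-1}\Big(\int_0^1 T(\gamma(t),\gamma'(t))\,dt\Big)S = S^{-1}\cI(T,\gamma)S.
\]

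The only thing that could reasonably be called an obstacle is confirming that the transformed paths still land in $\Omega$ and that differentiation commutes with the (constant) direct sum and conjugation; both are immediate from the free-set axioms and the constancy of $S$, so the substance of the lemma is entirely carried by the freeness of $T$.
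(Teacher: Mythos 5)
Your proposal is correct and follows the same argument as the paper: differentiate the transformed paths, apply the freeness of $T$ pointwise, and use linearity of the entrywise integral. The extra care you take in spelling out the free structure of $\Omega\times M(\C)^g$ is fine but does not change the route.
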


\begin{proof}
	Note $(S^{-1}\gamma S)'(t) = S^{-1}\gamma'(t)S$ and $(\gamma\oplus \eta)'(t) = \gamma'(t)\oplus \eta'(t)$.
	Hence, the linearity of integration and the free nature of $T$ yield
	\[
		\cI(T,\gamma\oplus \eta) 
			= \int_0^1 T(\gamma\oplus \eta, \gamma'\oplus \eta') \, dt
			= \int_0^1 T(\gamma, \gamma') \oplus T(\eta, \eta')\, dt
			= \cI(T,\gamma)\oplus \cI(T,\eta)
	\]
	and
	\[
		\cI(T,S^{-1}\gamma S) 
			= \int_0^1 T(S^{-1}\gamma S, S^{-1}\gamma' S) \, dt 
			= \int_0^1 S^{-1} T(\gamma, \gamma') S \, dt
			= S^{-1} \cI(T,\gamma) S.
	\]
\end{proof}

\begin{proposition}
	\label{prop:free-curl free means path independent}
	Suppose $\Omega$ is a free domain and $T$ is an analytic free vector field.
	If $T$ is free-curl free on $\Omega$ (that is, $DT(X,H)[K,0] = DT(X,K)[H,0]$), then $T$ is path independent.
\end{proposition}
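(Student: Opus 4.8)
The plan is to fix a level $n$ and two smooth paths $\gamma_0,\gamma_1\colon[0,1]\to\Omega[n]$ with common endpoints $A=\gamma_0(0)=\gamma_1(0)$ and $B=\gamma_0(1)=\gamma_1(1)$, and to prove $\cI(T,\gamma_0)=\cI(T,\gamma_1)$. Classically one would build a homotopy between $\gamma_0$ and $\gamma_1$ inside $\Omega[n]$ and invoke closedness, but $\Omega[n]$ need not be simply connected, so no such homotopy need exist. The idea is instead to gain room one level up: at level $2n$ I will deform the direct sum $\gamma_0\oplus\gamma_1$ into the swapped sum $\gamma_1\oplus\gamma_0$ by rotating one block past the other. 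Because $\Omega$ is closed under direct sums and under conjugation by invertibles, this rotation stays inside $\Omega[2n]$ no matter what $\Omega[n]$ looks like, and free-curl freeness will guarantee that $\cI$ is unchanged along the deformation.

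Concretely, let $\rho(\theta)=\bspm \cos\theta & -\sin\theta \\ \sin\theta & \cos\theta\espm$, put $R(s)=\rho(\tfrac{\pi}{2}s)\ot I_n\in\GL_{2n}(\C)$, and define the homotopy $\Gamma(s,t)=R(s)^{-1}\big(\gamma_0(t)\oplus\gamma_1(t)\big)R(s)$. Since $\gamma_0(t)\oplus\gamma_1(t)\in\Omega[2n]$ and $R(s)\in\GL_{2n}(\C)$, closure under conjugation gives $\Gamma(s,t)\in\Omega[2n]$, and $\Gamma$ is smooth on $[0,1]^2$. At $t\in\{0,1\}$ the inner matrix is $A\oplus A=I_2\ot A$, resp.\ $B\oplus B=I_2\ot B$, each of which commutes with $R(s)=\rho(\tfrac{\pi}{2}s)\ot I_n$; hence $\Gamma(s,0)\equiv A\oplus A$ and $\Gamma(s,1)\equiv B\oplus B$ are constant in $s$. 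Moreover $\Gamma(0,\cdot)=\gamma_0\oplus\gamma_1$, while $R(1)=\bspm 0 & -I_n \\ I_n & 0\espm$ interchanges the two blocks, so $\Gamma(1,\cdot)=\gamma_1\oplus\gamma_0$.

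The engine is to set $\Phi(s)=\cI(T,\Gamma(s,\cdot))$ and show $\Phi'(s)=0$ by differentiating under the integral. Demilinearity gives the identity $DT(X,H)[0,L]=T(X,L)$ already used in Lemma~\ref{lem:curl with or without L}, so that $\partial_s T(\Gamma,\partial_t\Gamma)=DT(\Gamma,\partial_t\Gamma)[\partial_s\Gamma,0]+T(\Gamma,\partial_s\partial_t\Gamma)$. Free-curl freeness rewrites the first term as $DT(\Gamma,\partial_s\Gamma)[\partial_t\Gamma,0]$, which by the same product rule in $t$ equals $\partial_t T(\Gamma,\partial_s\Gamma)-T(\Gamma,\partial_t\partial_s\Gamma)$. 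Since $\partial_s\partial_t\Gamma=\partial_t\partial_s\Gamma$, the integrand collapses to the total $t$-derivative $\partial_t T(\Gamma(s,t),\partial_s\Gamma(s,t))$, whence $\Phi'(s)=T(\Gamma(s,1),\partial_s\Gamma(s,1))-T(\Gamma(s,0),\partial_s\Gamma(s,0))$. Both boundary terms vanish, because $\Gamma(s,0)$ and $\Gamma(s,1)$ are constant in $s$, so $\partial_s\Gamma=0$ there and $T(\,\cdot\,,0)=0$. Thus $\Phi$ is constant.

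Finally, Lemma~\ref{lem:int is free} evaluates the two endpoints of $\Phi$: $\Phi(0)=\cI(T,\gamma_0)\oplus\cI(T,\gamma_1)$ and $\Phi(1)=\cI(T,\gamma_1)\oplus\cI(T,\gamma_0)$. Comparing the upper-left blocks in $\Phi(0)=\Phi(1)$ yields $\cI(T,\gamma_0)=\cI(T,\gamma_1)$, which is exactly path independence. The one genuinely free-analytic step—and the reason the conclusion holds regardless of the geometry of $\Omega$—is the homotopy $\Gamma$ of the second paragraph: conjugation-closure lets the rotation that interchanges the two blocks run entirely inside $\Omega[2n]$, so simple connectivity of $\Omega[n]$ is never needed. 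Everything else (smoothness of $\Gamma$, differentiating under the integral, and the cancellation above) is the routine, curl-free-powered analog of the classical fact that closed forms have homotopy-invariant line integrals.
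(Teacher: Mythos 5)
Your proof is correct, and it reaches path independence by a genuinely different route than the paper. The paper's argument is purely algebraic and runs at level $4n$: it forms $\hat\gamma=\gamma_1\oplus\gamma_2\oplus\gamma_1\oplus\gamma_2$, conjugates by a fixed (non-unitary) shear $S$ so that the pair $(S\hat\gamma S^{-1},S\hat\gamma' S^{-1})$ is pointwise of the form $\bspm Z & W \\ 0 & Z \espm$, reads off the upper-right corner of $T$ evaluated there via the block evaluation formula for free maps, and uses curl-freeness (in the $[K,L]$ form of Lemma~\ref{lem:curl with or without L}) to recognize that corner as the total $t$-derivative of $T$ along a single path whose direction coordinate vanishes at $t=0,1$; the Fundamental Theorem of Calculus kills it, and comparing against $S\,\cI(T,\hat\gamma)\,S^{-1}$ forces $\cI(T,\gamma_1)=\cI(T,\gamma_2)$. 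You instead work at level $2n$, build an honest fixed-endpoint homotopy from $\gamma_0\oplus\gamma_1$ to $\gamma_1\oplus\gamma_0$ by rotation conjugation, and run the classical ``closed one-forms have homotopy-invariant integrals'' computation (differentiate under the integral, apply curl-freeness pointwise, invoke equality of mixed partials, integrate in $t$); your boundary terms vanish because the conjugating rotations commute with $A\oplus A$ and $B\oplus B$, and the free structure of $T$ enters only through Lemma~\ref{lem:int is free} at the two ends of the homotopy. In effect you have promoted the remark that follows Proposition~\ref{prop:free-curl free means path independent} (the rotation argument from \cite{Pas20}, producing a simply connected set one level up) from an aside into the engine of the proof. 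Your route buys two things: the hard step is ordinary two-parameter calculus rather than free-analytic block bookkeeping, and, since your $R(s)$ is unitary, this particular step of the sufficiency direction needs only closure under direct sums and unitary conjugation --- a point of contrast with the remark in Section~2, since the paper's shear $S$ genuinely uses similarity invariance. What the paper's route buys in exchange is the avoidance of any differentiation under the integral sign or mixed-partials argument: everything reduces to one application of the upper-triangular evaluation formula and one application of the Fundamental Theorem of Calculus along a single path.
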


\begin{proof}
	Our first observation is that since the derivative of a free function is a Fr{\'e}chet derivative, it respects the chain rule and the  
	Fundamental Theorem of Calculus:
	\begin{equation}
		\label{eq:FTC}
		\int_0^1 D\alpha(\gamma(t))[\gamma'(t)] \, dt = \int_0^1 (\alpha\circ \gamma)'(t) \, dt = \alpha(\gamma(1)) - \alpha(\gamma(0)).
	\end{equation}
	
	Fix $n\in \ZZ^+$ such that $\Omega[n]\neq \varnothing$ and suppose $\gamma_1$ and $\gamma_2$ are smooth paths in $\Omega[n]$ with the 
	same endpoints.
	Let
	\[
		S = 
		\bpm 
			1 & 0 & 0 & -1 \\ 
			0 & 1 & 0 & 0 \\ 
			0 & 0 & 1 & 0 \\ 
			0 & 0 & 0 & 1
		\epm
		\quad \text{ and } \quad
		\hat{\gamma}(t) =
		\bpm 
			\gamma_1(t) & 0 & 0 & 0 \\ 
			0 & \gamma_2(t) & 0 & 0 \\ 
			0 & 0 & \gamma_1(t) & 0 \\ 
			0 & 0 & 0 & \gamma_2(t)
		\epm
	\]
	and observe that
	\[
		S \hat{\gamma}(t) S^{-1}
		=
		\bpm 
			\gamma_1(t) & 0 & 0 & \gamma_1(t)-\gamma_2(t) \\ 
			0 & \gamma_2(t) & 0 & 0 \\ 
			0 & 0 & \gamma_1(t) & 0 \\ 
			0 & 0 & 0 & \gamma_2(t)
		\epm.
	\]
	In particular, $S\hat{\gamma}(t)S^{-1}$ when viewed as a $2\times 2$ block matrix is precisely in the form $\bspm X & H \\ 0 & X \espm$; 
	precisely the form needed for realizing the derivative via point evaluation.
	Hence with a choice of $\gamma_{1,1'} = (\gamma_1, \gamma_1')$ and $\gamma_{2,2'} = (\gamma_2, \gamma_2')$ and an application of 
	Lemma~\ref{lem:int is free} we see
	\begin{equation}
		\label{eq:DT gamma}
	\begin{split}
		T(S\hat{\gamma}(t)S^{-1}, \, &S\hat{\gamma}'(t)S^{-1}) = \\
		S&\bpm 
			T(\gamma_{1,1'}) & 0 & 0 & 0 \\ 
			0 & T(\gamma_{2,2'}) & 0 & 0 \\ 
			0 & 0 & T(\gamma_{1,1'}) & 0 \\ 
			0 & 0 & 0 & T(\gamma_{2,2'})
		\epm S^{-1} \\
		=
		&\bpm 
			T(\gamma_{1,1'}) & 0 & 0 & T(\gamma_{1,1'}) - T(\gamma_{2,2'}) \\ 
			0 & T(\gamma_{2,2'}) & 0 & 0 \\ 
			0 & 0 & T(\gamma_{1,1'}) & 0 \\ 
			0 & 0 & 0 & T(\gamma_{2,2'})
		\epm \\
		=
		&\bpm 
			\begin{matrix}
				T(\gamma_{1,1'}) & 0 \\ 0 & T(\gamma_{2,2'}) \\ \vspace{-1em} \\ 0 & 0 \\ 0 & 0
			\end{matrix}
			& \begin{matrix} 
				DT\bpm \gamma_{1,1'} & 0 \\ 0 & \gamma_{2,2'} \epm\bbm 0 & \gamma_{1,1'}-\gamma_{2,2'} \\ 0 & 0 
				\ebm
			\\ \vspace{-1em} \\
			\begin{matrix}
				T(\gamma_{1,1'}) & 0 \\ 0 & T(\gamma_{2,2'})
			\end{matrix}
			\end{matrix}
		\epm.
	\end{split}
	\end{equation}
	Expanding out the derivative in the upper right hand corner and applying the fact that we assumed $T$ is free-curl free:
	\begin{equation}
		\label{eq:DT chain rule}
	\begin{split}
		DT\bpm \gamma_{1,1'} & 0 \\ 0 & \gamma_{2,2'} \epm
		&\bbm 0 & \gamma_{1,1'}-\gamma_{2,2'} \\ 0 & 0 \ebm	\\
		&=
		DT\left(
			\bpm \gamma_1 & 0 \\ 0 & \gamma_2 \epm,
			\bpm \gamma_1' & 0 \\ 0 & \gamma_2' \epm
		\right)\left[
			\bpm 0 & \gamma_1 - \gamma_2 \\ 0 & 0 \epm,
			\bpm 0 & \gamma_1'-\gamma_2' \\ 0 & 0 \epm
		\right]	\\
		&= 
		DT\left(
			\bpm \gamma_1 & 0 \\ 0 & \gamma_2 \epm,
			\bpm 0 & \gamma_1 - \gamma_2 \\ 0 & 0 \epm
		\right)\left[
			\bpm \gamma_1' & 0 \\ 0 & \gamma_2' \epm,
			\bpm 0 & \gamma_1'-\gamma_2' \\ 0 & 0 \epm
		\right].
	\end{split}
	\end{equation}
	This is precisely the derivative of the composition of $T$ with a smooth path.
	Choosing
	\[
		\tilde{\gamma}(t) 
		= \left(			
			\bpm \gamma_1 & 0 \\ 0 & \gamma_2 \epm,
			\bpm 0 & \gamma_1 - \gamma_2 \\ 0 & 0 \epm
		\right)
	\]
	and applying equations~\eqref{eq:FTC} and \eqref{eq:DT chain rule} we see
	\[
		\cI(DT,\tilde{\gamma}) = \int_0^1 DT( \tilde{\gamma}(t))[\tilde{\gamma}'(t)] \, dt = T(\tilde{\gamma}(1)) - T(\tilde{\gamma}(0)).
	\]
	However,
	\[
		\tilde{\gamma}(1)
		= \left(			
			\bpm \gamma_1(1) & 0 \\ 0 & \gamma_2(1) \epm,
			\bpm 0 & \gamma_1(1) - \gamma_2(1) \\ 0 & 0 \epm
		\right)
		= \left(			
			\bpm \gamma_1(1) & 0 \\ 0 & \gamma_2(1) \epm,
			\bpm 0 & 0 \\ 0 & 0 \epm
		\right)
	\]
	and similarly
	\[
		\tilde{\gamma}(0)
		= \left(			
			\bpm \gamma_1(0) & 0 \\ 0 & \gamma_2(0) \epm,
			\bpm 0 & \gamma_1(0) - \gamma_2(0) \\ 0 & 0 \epm
		\right)
		= \left(			
			\bpm \gamma_1(0) & 0 \\ 0 & \gamma_2(0) \epm,
			\bpm 0 & 0 \\ 0 & 0 \epm
		\right).
	\]
	Thus, linearity of $T$ in its second coordinate implies $T(\tilde{\gamma}(1)) = 0 = T(\tilde{\gamma}(0))$, hence $\cI(DT,\tilde{\gamma}) 
	= 0$.
	In light of equation~\eqref{eq:DT gamma} it follows that
	\[
		\cI(T,S \hat{\gamma} S^{-1}) =
		\bpm 
			\cI(T,\gamma_1) & 0 & 0 & 0 \\ 
			0 & \cI(T,\gamma_2) & 0 & 0 \\ 
			0 & 0 & \cI(T,\gamma_1) & 0 \\ 
			0 & 0 & 0 & \cI(T,\gamma_2)
		\epm.
	\]
	Finally, another application of Lemma~\ref{lem:int is free} yields
	\[
		\cI(T,S \hat{\gamma}S^{-1}) = S\cI(T,\hat{\gamma})S^{-1} =
		\bpm 
			\cI(T,\gamma_1) & 0 & 0 & \cI(T,\gamma_1) - \cI(T,\gamma_2) \\ 
			0 & \cI(T,\gamma_2) & 0 & 0 \\ 
			0 & 0 & \cI(T,\gamma_1) & 0 \\ 
			0 & 0 & 0 & \cI(T,\gamma_2)
		\epm .
	\]
	Therefore
	\[
		\cI(T,\gamma_1) = \int_0^1 T(\gamma_1(t), \gamma_1'(t)) \, dt = \int_0^1 T(\gamma_2(t), \gamma_2'(t)) \, dt = \cI(T,\gamma_2)
	\]
	and we conclude $T$ is path independent.
\end{proof}

\begin{remark}
Peculiarly enough, there was no mention of the geometry of $\Omega$ as one expects in the classical setting.
This is a common phenomenon in Free Analysis and for some insight into this curiosity, we use an argument from \cite{Pas20}.
If $\gamma$ and $\eta$ are two distinct non-intersecting paths in $\Omega$ with the same endpoints then setting
\[
	\Gamma = \set{
		\bspm a & -b \\ b & a \espm \bspm \gamma(t) & 0 \\ 0 & \eta(t) \espm\bspm a & b \\ -b & a \espm 
		\, : \, a,b\in \RR, \, a^2+b^2 = 1, \, t\in [0,1]		
	}\subset \Omega
\]
we see that $\Gamma\cong S^2$, which is simply connected. 
\end{remark}

We know from Theorem~\ref{thm:free Clairaut} that derivatives of free maps are free-curl free.
If $\Omega$ is connected, then Theorem~\ref{thm:free-curl free means potential} provides the converse.
However, for the first time in this article, the proof of Theorem~\ref{thm:free-curl free means potential} requires a delicate touch.
Specifically, by defining our potential level-wise, we open up the possibility that the different levels do not align.
%

In the classical setting, if $F$ is a path-independent vector field, then fix an ``anchor" point $x_0$ in the domain and define a potential 
function $\vf$ with $\vf(x)$ equal to the result of integrating $F$ from $x_0$ to $x$ along any path.
We still employ this exact idea in the free setting: for each level $n$, we choose an ``anchor" point $Z_n$ in $\Omega[n]$ and define 
$\alpha_n(X)$ to be the integral of $T$ from $Z_n$ to $X$, along any path.
However, there is no guarantee that the $Z_n$ are related in any obvious fashion (beware: there exist free domains that 
contain no scalar matrices), nor do we have any guarantee that $\alpha_n$ respects similarities (or even unitaries).
Thus, the proof of Theorem~\ref{thm:free-curl free means potential} is an imitation of the classical proof with three additional steps to fix 
any misalignments in the level-wise potentials:
\begin{enumerate}[(i)]
	\item At each level $n$, fix an ``anchor" point $Z\in \Omega[n]$ and define $\alpha_n(X) = \int_{Z_n}^X T$, integrated along any path.
	\item Define $\beta_n(X)$ as the Haar integral of $U^*\alpha_n(UXU^*)U$ over the unitary group -- this results in a function that 
	respects conjugation by unitaries.
	\item Use the entry-wise analyticity of $\beta_n$ to show that $\beta_n$ respect similarities
	\item Use level-wise direct sums to find constants $b_n$ such that $\Phi_n = \beta_n + b_n$ defines an analytic free map.
\end{enumerate}

Step (iii) is interesting in its own right and may have application outside of this paper.
Thus, we present Step (iii) as a self-contained Proposition.

\similarity

\begin{proof}
	Let $\mathbb{S}_n$ denote the set of $n\times n$ self-adjoint matrices.
	We claim that if $\eta:M_n(\C)\to \C^k$ is analytic and vanishes on $\mathbb{S}_n$, then $\eta$ vanishes on $M_n(\C)$.
	
	Accordingly, suppose $\eta$ is an analytic map vanishing on $\mathbb{S}_n$.
	Let $A,B\in \mathbb{S}_n$ and define the map $\zeta:\C\to \C^k$ by $\zeta(z) = \eta(A+izB)$.
	Note that if $w$ is pure imaginary then $A+iwB\in \mathbb{S}_n$ and we must have $\zeta(w) = 0$.
	Hence, $\zeta$ is an analytic map that vanishes on the imaginary axis, thus $\zeta$ is identically zero.
	In particular, $0 = \zeta(1) = \zeta(A+iB)$.
	Every $X\in M_n(\C)$ can be decomposed as $A+iB$ for some $A,B\in \mathbb{S}_n$, therefore $\eta$ vanishes on $M_n(\C)$ and our claim is proved.

	Let $\cU_n$ denote the group of $n\times n$ unitary matrices.
	Define $\psi:\GL_n(\C)\to M_n(\C)^h$ by 
	\[
		\psi(S) = S\beta(S^{-1}XS)S^{-1} - \beta(X).
	\]
	We see immediately that $\psi$ is analytic and vanishes on $\cU_n$.
	Next, let $\ep:M_n(\C)\to GL_n(\C)$ be defined by $\ep(X) = e^{iX}$. 
	Note that $\ep$ is surjective and $\ep$ maps $\mathbb{S}_n$ onto $\cU_n$.
	Hence, the composition $\psi\circ \ep:M_n(\C)\to M_n(\C)^h$ is analytic and vanishes on $\mathbb{S}_n$.
	By our claim, $\psi\circ \ep = 0$, hence the surjectivity of $\ep$ implies $\psi$ vanishes on $\GL_n(\C)$.
	Therefore $\beta$ respects conjugation by similarities.
\end{proof}

\potential

\begin{proof}
	The first direction is handled by Theorem~\ref{thm:free Clairaut}.

	Conversely, suppose $T$ is an analytic free vector field on $\Omega\times M(\C)^g$.
	Let $\cN$ be the set of all positive integers $n$ such that $\Omega[n]\neq \varnothing$.
	We first construct an analytic free map $\Phi$ on $\Omega\times \Omega$ by
	\[
		\Phi(X,Y) = \int_{0}^{1} T(\gamma(t),\gamma'(t))\, dt
	\]
	where $\gamma$ is any smooth path in $\Omega$ such that $\gamma(0) = Y$ and $\gamma(1) = X$.
	This map is well-defined since Proposition~\ref{prop:free-curl free means path independent} tells us that $T$ is path independent.
	Moreover, $\Phi(X,Y) + \Phi(Y,Z) = \Phi(X,Z)$ for all $X,Y,Z\in \Omega[n]$ and all $n\in \cN$.
	
	Let $\gamma$ and $\eta$ be smooth paths from $Y$ to $X$ and $Z$ to $W$, respectively.
	Hence $S^{-1}\gamma S$ is a path from $S^{-1}YS$ to $S^{-1}XS$ while $\gamma \oplus \eta$ is a path from $Y\oplus Z$ to $X\oplus W$.
	Thus, Lemma~\ref{lem:int is free} shows us that $\Phi$ is free.
	Moreover, for any smooth path $\gamma$ from $Y$ to $X$ and any smooth path $\eta$ from $0$ to $H$,
	\begin{align*}
		\Phi\left(\bpm X & H \\ 0 & X\epm, \bpm Y & 0 \\ 0 & Y\epm\right) 
			&= \int_0^1 T\left( \bpm \gamma(t) & \eta(t) \\ 0 & \gamma(t) \epm, \bpm \gamma'(t) & \eta'(t) \\ 0 & \gamma'(t) \epm\right) \, dt\\
			&= \int_0^1 \bpm T(\gamma(t),\gamma'(t)) & DT(\gamma(t),\gamma'(t))[\eta(t),\eta'(t)] \\ 0 & T(\gamma(t),\gamma'(t)) \epm \, dt\\
			&= \int_0^1 \bpm T(\gamma(t),\gamma'(t)) & DT(\gamma(t),\eta(t))[\gamma'(t),\eta'(t)] \\ 0 & T(\gamma(t),\gamma'(t)) \epm \, dt\\
			&= \bpm \Phi(X,Y) & T(\gamma(1),\eta(1)) - T(\gamma(0),\eta(0)) \\ 0 & \Phi(X,Y) \epm \\
			&= \bpm \Phi(X,Y) & T(X,H)-T(Y,0) \\ 0 & \Phi(X,Y) \epm\\
			&= \bpm \Phi(X,Y) & T(X,H) \\ 0 & \Phi(X,Y) \epm.
	\end{align*}
	Thus, 
	\begin{equation}
		\label{eq:DF is T}
		D\Phi(X,Y)[H,0] = T(X,H) \quad \text{ and } \quad D\Phi(X,Y)[0,K] = -T(Y,K).
	\end{equation}
	
	Next, for each $n\in \cN$, choose $Z_n\in \Omega[n]$ and define $\alpha_n:\Omega[n]\to M_n(\C)^h$ by 
	\[
		\alpha_n(X) = \Phi(X,Z_n).
	\]
	Using the fact that $\Phi(X,Y) = -\Phi(Y,X)$ we see
	\[
		\alpha_n(X) - \alpha_n(Y) = \Phi(X,Z_n) - \Phi(Y,Z_n) = \Phi(X,Y)
	\]
	for all $X,Y\in \Omega[n]$.

	For each $n\in \cN$ let $\cU_n$ denote the group of $n\times n$ unitary matrices and define $\beta_n:\Omega[n]\to M(\C)^h$ via Haar integration:
	\[
		\beta_n(X) = \int_{\cU_n} U^*\alpha_n(UXU^*)U\, dU.
	\]
	Let $V\in \cU_n$ and note that
	\begin{align*}
		\beta_n(V^*XV) 
			&= \int_{\cU_n} U^*\alpha_n(UV^*XVU^*)U \, dU \\ 
			&= \int_{\cU_n} V^*W^*\alpha_n(W^*XW^*)WV \, d(WV) \\ 
			&= V^*\Big[\int_{\cU_n} W^*\alpha_n(W^*XW^*)W \, dW\Big]V \\ 
			&= V^*\beta_n(X)V,
	\end{align*}
	where the invariance of the Haar measure is used.
	Moreover, for any $X,Y\in \Omega[n]$ we have
	\begin{align*}
		\beta_n(X) - \beta_n(Y) 
			&= \int_{\cU_n} U^*\big(\alpha_n(U^*XU) - \alpha_n(U^*YU)\big)U \, dU \\
			&= \int_{\cU_n} U^*\big( \Phi(U^*XU,U^*YU) \big)U \, dU \\
			&= \int_{\cU_n} \Phi(X,Y) \, dU \\
			&= \Phi(X,Y).
	\end{align*}
	Hence, Equation~\ref{eq:DF is T} implies $D\beta_n(X)[H] = D\Phi(X,Y)[H,0] = T(X,H)$.
	Since $\beta_n$ is analytic and respects conjugation by unitaries, Proposition~\ref{prop:analytic and unitary gives similarity} implies 
	$\beta_n$ also respects conjugation by similarities.
	
	Our last step is to construct $f$ from $\beta$ by adding appropriate scalars. 
	Suppose $X\in\Omega[m]$, $Y\in \Omega[n]$ and let
	\[
		\beta_{m+n}\bpm X & 0 \\ 0 & Y \epm = \bpm A & B \\ C & D \epm,
	\]
	where $A,B,C,D$ are tuples of size $m\times m, m\times n, n\times m$ and $n\times n$, respectively.
	Take any nonzero $\mu,\nu\in\C$ and note $S = \mu I_m\oplus \nu I_n$ is invertible and $S^{-1}(X\oplus Y)S = X\oplus Y$.
	Hence 
	\[
		\bpm A & B \\ C & D \epm = \beta_{m+n}\big(S^{-1}(X\oplus Y)S\big) 
		= S^{-1}\beta_{m+n}(X\oplus Y)S
		= \bpm A & \frac{\nu}{\mu} B \\ \frac{\mu}{\nu}C & D \epm
	\]
	and we conclude that $B$ and $C$ are zero.
	
	Next, for any $X_1,X_2\in \Omega[m]$ and $Y_1,Y_2\in\Omega[n]$ we see that
	\begin{equation}
		\label{eq:A D indep of X Y}
		\begin{split}
		\beta_{m+n}\bpm X_1 & 0 \\ 0 & Y_1 \epm - \beta_{m+n}\bpm X_2 & 0 \\ 0 & Y_2 \epm
			&= \Phi\left(\bpm X_1&0\\0&Y_1 \epm, \bpm X_2&0\\0&Y_2 \epm \right) \\
			&= \bpm \Phi(X_1,X_2)&0\\0&\Phi(Y_1,Y_2) \epm.
		\end{split}
	\end{equation}
	Hence, $A$ is independent of our choice of $Y$ and by a similar argument, $D$ is independent of our choice of $X$.
	Treating $A$ and $D$ as functions we see from Equation~\eqref{eq:A D indep of X Y} that
	\[
		A(X_1) - A(X_2) = \Phi(X_1,X_2) = \beta_m(X_1) - \beta_m(X_2)
	\]
	and
	\[
		D(Y_1)-D(Y_2) = \Phi(Y_1,Y_2) = \beta_n(Y_1) - \beta_n(Y_2).
	\]
	Rearranging these equations shows
	\[
		A(X_1) - \beta_m(X_1) = A(X_2) - \beta_m(X_2)
	\]
	for all $X_1,X_2\in \Omega[m]$. 
	Hence $A - \beta_m$ is constant as must be $D - \beta_n$.
	We let $A - \beta_m = C_m$ and $D - \beta_n = C_n$.

	Now take $S_m$ and $S_n$ to be similarities of size $m\times m$ and $n\times n$, respectively.
	Observe
	\begin{align*}
			\bpm A(S_m^{-1}XS_m) & 0 \\ 0 & D(S_n^{-1}YS_n) \epm
			&= \beta_{m+n}\bpm S_m^{-1}XS_m & 0 \\ 0 & S_n^{-1}YS_n \epm \\
			&= \bpm S_m^{-1} & 0 \\ 0 & S_m^{-1} \epm 
				\beta_{m+n}\bpm X & 0 \\ 0 & Y \epm \bpm S_m & 0 \\ 0 & S_n \epm \\
			&= \bpm S_m^{-1}A(X)S_m & 0 \\ 0 & S_n^{-1}D(Y)S_n \epm.
	\end{align*}
	Hence, $A$ and $D$ respect similarities.
	Moreover, 
	\[
		C_m = A(S_m^{-1}XS_m) - \beta_m(S_m^{-1}XS_m) 
			= S_m^{-1}\Big(A(X) - \beta_m(X)\Big)S_m
			= S_m^{-1}C_mS_m.
	\]
	Thus, $C_m = c_mI_m$ for some scalar tuple $c_m\in \C^h$.
	A similar argument shows that $C_n = c_nI_n$ for some $c_n\in \C^h$.
	Therefore, $\beta_{m+n}$ ``nearly" respects direct sums.
	
	For any $m,n\in \cN$ we let $c_{m+n}^m$ and $c_{m+n}^n$ be the scalars in $\C^h$ such that
	\[
		\beta_{m+n}\bpm X & 0 \\ 0 & Y \epm - \bpm \beta_m(X) & 0 \\ 0 & \beta_n(Y) \epm
			= \bpm c_{m+n}^m I_m & 0 \\ 0 & c_{m+n}^n I_n \epm
	\]
	for any $X\in \Omega_m$ and $Y\in \Omega_n$.
	We note that these constants are well-defined since $\beta_{m+n}$ respects similarities:
	\begin{align*}
		\beta_{m+n}\bpm Y & 0 \\ 0 & X \epm &- \bpm \beta_n(Y) & 0 \\ 0 & \beta_m(X) \epm\\
			&= 	\bpm 0 & I_n \\ I_m & 0 \epm
				\left[\beta_{m+n}\bpm X & 0 \\ 0 & Y \epm - \bpm \beta_m(X) & 0 \\ 0 & \beta_n(Y) \epm\right]
				\bpm 0 & I_m \\ I_n & 0 \epm \\
			&= 	\bpm 0 & I_n \\ I_m & 0 \epm \bpm c_{m+n}^m I_m & 0 \\ 0 & c_{m+n}^nI_n \epm 
				\bpm 0 & I_m \\ I_n & 0 \epm 
			= 	\bpm c_{m+n}^n I_n & 0 \\ 0 & c_{m+n}^mI_m  \epm.
	\end{align*}
	
	Suppose now that $k,m,n\in \cN$ and $X\in \Omega[k]$, $Y\in \Omega[m]$ and $Z\in \Omega[n]$. 
	It follows that
	\begin{align*}
		\beta_{k+m+n}\bspm X & 0 & 0 \\ 0 & Y & 0 \\ 0 & 0 & Z \espm 
			- \bspm \beta_k(X) & 0 & 0 \\ 0 & \beta_m(Y) & 0 \\ 0 & 0 & \beta_n(Z) \espm
			&= \beta_{k+m+n}\bspm X & 0 & 0 \\ 0 & Y & 0 \\ 0 & 0 & Z \espm 
			- \bspm \beta_{k+m}\bspm X & 0 \\ 0 & Y \espm & 0 \\ 0 & \beta_n(Z)  \espm\\
			&+ \bspm \beta_{k+m}\bspm X & 0 \\ 0 & Y \espm & 0 \\ 0 & \beta_n(Z)  \espm
			- \bspm \beta_k(X) & 0 & 0 \\ 0 & \beta_m(Y) & 0 \\ 0 & 0 & \beta_n(Z) \espm\\
			&= \bspm c_{k+m+n}^{k+m}I_{k+m} & 0 \\ 0 & c_{k+m+n}^nI_n \espm
			+ \bspm c_{k+m}^k I_k & 0 & 0\\ 0 & c_{k+m}^m I_m & 0 \\0 & 0 & 0 \espm\\
			&= \bspm (c_{k+m+n}^{k+m} + c_{k+m}^k )I_k & 0 & 0\\ 0 & (c_{k+m+n}^{k+m} + c_{k+m}^m) I_m & 0 \\0 & 0 & c_{k+m+n}^n I_n \espm .
	\end{align*}
	Interchanging the roles of $X, Y$ and $Z$, we have the (redundant) equations
	\begin{equation}
		\label{eq:triple cs}
		\begin{split}
		c_{k+m+n}^k &= c_{k+m+n}^{k+m}+c_{k+m}^k = c_{k+m+n}^{k+n} + c_{k+n}^k \\
		c_{k+m+n}^m &= c_{k+m+n}^{k+m}+c_{k+m}^m = c_{k+m+n}^{m+n} + c_{m+n}^m \\
		c_{k+m+n}^n &= c_{k+m+n}^{k+n}+c_{k+n}^n = c_{k+m+n}^{m+n} + c_{m+n}^n.
		\end{split}
	\end{equation}
	
	Now, let $n_0 = \min(\cN)$ and for each $k\in \cN$, let $b_k = c_{k+n_0}^k - c_{k+n_0}^{n_0}$ and define
	\[
		f_k(X) = \beta_k(X) + b_k I_k
	\]
	for all $X\in \Omega[k]$.
	Since $f_k$ differs from $\beta_k$ by a scalar matrix, $f_k$ respects conjugation by similarities.
	Setting $f = (f_k)_{k\in \cN}$, we claim $f:\Omega\to M(\C)^h$ also respects direct sums.
	Accordingly, suppose $k,m\in \cN$, $X\in \Omega[k]$ and $Y\in \Omega[m]$ and consider
	\[
		f_{k+m}\bpm X & 0 \\ 0 & Y \epm - \bpm f_k(X) & 0 \\ 0 & f_m(Y) \epm.
	\]
	We show that this difference is zero.
	By their respective definitions, 
	\begin{align*}
		&f_{k+m}\bpm X & 0 \\ 0 & Y \epm - \bpm f_k(X) & 0 \\ 0 & f_m(Y) \epm \\
			&= \beta_{k+m}\bpm X & 0 \\ 0 & Y \epm - \bpm \beta_k(X) & 0 \\ 0 & \beta_m(Y) \epm
			+ b_{k+m}I_{k+m} - \bpm d_k I_k & 0 \\ 0 & b_m I_m \epm \\
			&= \bpm c_{k+m}^k I_k & 0 \\ 0 & c_{k+m}^mI_m\epm + b_{k+m}I_{k+m} - \bpm b_k I_k & 0 \\ 0 & b_m I_m \epm.
	\end{align*}
	The first $k\times k$ block is $(c_{k+m}^k + b_{k+m} - b_k)I_k$.
	By our definition of $b_{k+m}$ and $b_k$ and a few applications of Equation~\eqref{eq:triple cs} we see
	\begin{align*}
		c_{k+m}^k + b_{k+m} - b_k 
			&= c_{k+m}^k + [c_{k+m+n_0}^{k+m} - c_{k+m+n_0}^{n_0}] - [c_{k+n_0}^k - c_{k+n_0}^{n_0}] \\
			&= c_{k+m}^k + [(c_{k+m+n_0}^{k}-c_{k+m}^k) - c_{k+m+n_0}^{n_0}] - [c_{k+n_0}^k - c_{k+n_0}^{n_0}] \\
			&= (c_{k+m+n_0}^{k} - c_{k+n_0}^k) - (c_{k+m+n_0}^{n_0} - c_{k+n_0}^{n_0}) \\
			&= c_{k+m+n_0}^{k+n_0} - (c_{k+m+n_0}^{k+n_0}) \\
			&= 0.
	\end{align*}
	Hence, for all $k,m\in \cN$, $f_{k+m}(X\oplus Y) = f_k(X)\oplus f_m(Y)$ for all $X\in \Omega[k]$ and $Y\in \Omega[m]$.
	Thus, $f$ respects direct sums and conjugation by similarities.
	Moreover, $Df(X)[H] = D\beta(X)[H] = T(X,H)$ since at each level, $f$ and $\beta$ differ by a scalar matrix.
	Therefore there exists an analytic free function $f$ such that $Df(X)[H] = T(X,H)$.
\end{proof}

\begin{corollary}
	Suppose $\Omega$ is a free domain that is connected.
	If $(\phi_n)_{n=1}^\infty$ is a sequence of level-wise analytic functions on $\Omega$ such that $(D\phi_n)_{n=1}^\infty$ is an analytic 
	free vector field, then there exists an analytic free map $f$ on $\Omega$ such that $\phi$ and $f$ differ by a constant at each level if 
	and only if $(D\phi_n)_{n=1}^\infty$ is free-curl free.
\end{corollary}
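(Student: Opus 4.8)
The plan is to deduce this corollary directly from the two main theorems, the only extra ingredient being the elementary fact that an analytic map with vanishing derivative on a connected open set is constant. First I would set $T = (D\phi_n)_{n=1}^\infty$, regarded as the map $T(X,H) = D\phi_n(X)[H]$ for $X\in\Omega[n]$ and $H\in M_n(\C)^g$. Since each $\phi_n$ is analytic, each $D\phi_n(X)[H]$ is analytic in $(X,H)$, so $T$ is analytic; by hypothesis $T$ is free demilinear. Thus $T$ is an analytic free demilinear map on $\Omega\times M(\C)^g$, and both Theorem~\ref{thm:free Clairaut} and Theorem~\ref{thm:free-curl free means potential} apply to it. Here I read ``$\phi$ and $f$ differ by a constant at each level'' as: for each $n$, the difference $\phi_n - f_n$ is independent of $X$, equal to a fixed tuple in $M_n(\C)^h$.

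For the forward implication, suppose there is an analytic free map $f$ with $\phi_n - f_n$ constant on each level. Then $D\phi_n(X)[H] = Df_n(X)[H]$ for every $X$ and $H$, so $T = Df$. Theorem~\ref{thm:free Clairaut} asserts that the derivative of an analytic free map is free-curl free, and hence $T = (D\phi_n)_{n=1}^\infty$ is free-curl free.

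For the reverse implication, suppose $T$ is free-curl free. Because $\Omega$ is connected and $T$ is analytic free demilinear and free-curl free, Theorem~\ref{thm:free-curl free means potential} produces an analytic free map $f$ on $\Omega$ with $Df(X)[H] = T(X,H) = D\phi_n(X)[H]$ for all $X\in\Omega[n]$ and all $H$. Consequently $D(\phi_n - f_n)(X)[H] = 0$ for every $X\in\Omega[n]$ and every direction $H$; that is, $\phi_n - f_n$ has identically vanishing Fr\'echet derivative on $\Omega[n]$.

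The remaining point, and the one to handle with a little care, is to upgrade ``vanishing derivative'' to ``genuinely constant'' at each level. This is exactly where connectivity enters: since $\Omega$ is a connected free domain, each level $\Omega[n]$ is a connected open subset of $M_n(\C)^g\cong\C^{n^2 g}$, so an analytic map on $\Omega[n]$ with zero derivative is constant. (This is the same level-wise connectivity that the path construction in Proposition~\ref{prop:free-curl free means path independent} already relied upon, since the paths there live in a single $\Omega[n]$.) Hence $\phi_n - f_n \equiv C_n$ for a fixed $C_n\in M_n(\C)^h$, so $\phi$ and $f$ differ by a constant at each level, completing the proof. The only genuine obstacle is thus the modest one of confirming that the ambient notion of connectivity forces each $\Omega[n]$ to be connected, which validates the final step; everything else is a direct appeal to the two theorems.
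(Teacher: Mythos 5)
Your proof is correct and follows essentially the same route as the paper: apply Theorem~\ref{thm:free Clairaut} for the forward direction and Theorem~\ref{thm:free-curl free means potential} for the reverse, then identify $\phi_n - f_n$ as a constant at each level. The only difference is that you spell out the final step (zero Fr\'echet derivative on a connected open $\Omega[n]$ forces $\phi_n - f_n$ to be constant), which the paper leaves implicit -- a worthwhile clarification, but not a different argument.
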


\begin{proof}
	Suppose $T = (D\phi_n)_{n=1}^\infty$. 
	If $T$ is free-curl free then Theorem~\ref{thm:free-curl free means potential} tells us there exists a free analytic map $f$ on $\Omega$ such that $Df(X)[H] = T(X,H)$.
	At level $n$, $Df(X)[H] = T(X,H) = D\phi_n(X)[H]$, hence $f$ and $\phi$ must differ by a constant.
	
	On the other hand, if $f$ and $\phi$ differ by a constant at each level, then their derivatives are the same, hence $Df(X)[H] = T(X,H) = D\phi(X)[H]$.
	Since $f$ is free analytic, Theorem~\ref{thm:free Clairaut} implies $T$ is free-curl free.
\end{proof}

\section{Applications}

In this section we present two applications of the ideas of the main article. 
The first result proves the existence of free pluriharmonic conjugates while the second shows that if a derivative is an nc rational, then it 
must be the derivative of an nc rational.

\subsection{Conjugates of Free Pluriharmonic Functions}
In this subsection we present an elementary proof of the existence of conjugates of free pluriharmonic functions.
While this fact has been previously established, see \cite[Corollary 2.2]{Pas20}, we provide an alternate proof taking advantage of 
Theorem~\ref{thm:free-curl free means potential}.

The ideas and proofs in this subsection are due to Robert Martin who kindly gave permission for the author to reproduce his work.

\begin{definition}
	Recall that $\bbS_n$ denotes the set of $n\times n$ self-adjoint matrices.
	Let $\bbS^g = (\bbS^g_n)_{n=1}^\infty$ denote our NC self-adjoint universe.
	For each $n$, let $\cU_n$ denote set of unitary matrices in $M_n(\C)$.
	A subset $\Gamma\subset \bbS^d$ is a \df{real free set} if it closed under direct sums and conjugation by joint unitary similarities. 
	
	Suppose $\Gamma\subset \bbS^d$ is a real free set.
	If $u = (u[n])_{n=1}^\infty$ where $u[n]:\Gamma[n]\to \bbS_n$, then we write $u:\Gamma\to \bbS$.
	If, in addition,
	\begin{enumerate}
		\item $u(X\oplus Y) = \bpm u(X) & 0 \\ 0 & u(Y) \epm$
		\item $u(V^{-1}XV) = V^{-1}u(X)V$
	\end{enumerate}
	whenever $X = (X_1,\dots, X_g)\in \Gamma[n]$, $Y = (Y_1,\dots, Y_g)\in \Gamma[m]$ and $V\in \cU_n$, then $u$ is a \df{real free map} or 
	\df{real free function}.	
\end{definition}

If $\Gamma$ is a real free set that is closed under conjugation by similarities, then Proposition~\ref{prop:analytic and unitary gives 
similarity} implies that any real free map on $\Gamma$ is an honest-to-goodness free map.

\begin{definition}
	If $X = \Re{X} + i\Im{X}\in M_n(\C)^d$, we write $\ora{X} := (\Re{X}, \Im{X})\in \bbS^{2g}$, and $\wt{X} := (-\Im{X}, \Re{X}) = 
	\ora{iX}$.
	We say $\Gamma\subset \bbS^g$ is a \df{real free domain} if $\Gamma = \ora{\Omega}$ for some free domain $\Omega$ and if $\Omega$ is 
	connected, then we say $\Gamma$ is \df{connected}.
	
	Let $f$ be an analytic free map on a free domain $\Omega\subset M(\C)^g$.
	If $u = \Re{f}$, then we view $u$ as a real free function defined on the real free domain
	\[
		\Dom{u} := \set{\ora{X} \, : \, X \in \Omega}
	\]
\end{definition}

The following results are found in \cite[Thoerem 4.1]{HorKle20} and \cite[Theorem 3.5.2]{Klem20}, respectively.

\begin{theorem}[NC Cauchy-Riemann equations] 
	\label{thm:NC Cauchy-Riemann equations}
	Suppose $f$ is an analytic free map on a free domain $\Omega$. If $u = \Re{f}$ and $v = \Im{f}$, then
	\[
		Du(\ora{X})[\ora{H}] = Dv(\ora{X})[\wt{H}].
	\]
\end{theorem}

\begin{theorem}[NC Laplace equations] 
	\label{thm:NC Laplace equations}
	Suppose $f$ is an analytic free map on a free domain $\Omega$. If $u = \Re{f}$ and $v = \Im{f}$, then
	\[
		D^2u(\ora{X})[\ora{H}, \ora{K}] + D^2u(\ora{X})[\wt{H}, \wt{K}] = 0
	\]
	and
	\[
		D^2v(\ora{X})[\ora{H}, \ora{K}] + D^2v(\ora{X})[\wt{H}, \wt{K}] = 0.
	\]
\end{theorem}

\begin{definition}(\cite[Definition 3.5.6]{Klem20}).
	A real free function, $u$, on a real free domain, $\Gamma$, is \df{free pluriharmonic} if it obeys the NC Laplace equations.
\end{definition}

We need one last technical result before proceeding.

\begin{theorem} \textup{(\cite[Theorems 3.5.2 \& 3.5.3, Corollary 3.5.4]{Ham82}).}
	\label{thm:C2 function result}
	Suppose $\sF$ and $\sG$ are Fr{\'e}chet spaces, $\sU\subset \sF$ is an open domain, and $T:\cU\to \sG$ is a continuous map.
	If $D^2T$ is jointly continuous as a map on $\sU\times \sF\times \sF$, then $D^2T(X)[Y,Z]$ is linear in both $Y$ and $Z$ and 
	$D^2T(X)[Y,Z] = D^2T(X)[Z,Y]$.
\end{theorem}

We now present the main theorem of this subsection and its proof.

\begin{theorem}
	\label{thm:nc pluriharmonic conj}
	Suppose $u:\Gamma\to\bbS$ is a free pluriharmonic function on the connected real free domain $\Gamma\subset \bbS^{2g}$, and assume that 
	$u$ is jointly level-wise continuous on $\Gamma\times \bbS^{2g}\times \bbS^{2g}$.
	Then $u$ has a free pluriharmonic conjugate $v:\Gamma\to \bbS$ so that $f = u + iv:\Omega\to M(\C)$ is an analytic free map on a 
	connected free domain $\Omega$ with $\ora{\Omega} = \Gamma$.
\end{theorem}

\begin{proof}
	Define $\Omega = \set{Z = X + iY \, : \, \ora{Z} = (X,Y)\in \Gamma}$.
	By the assumptions on $\Gamma$, $\Omega$ is a connected free domain. For each $n\in \ZZ^+$, $X\in \Omega[n]$ and $H\in M_n(\C)^g$, we 
	define
	\[
		T(X,H) := Du(\ora{X})[\ora{H}] - i Du(\ora{X})[\wt{H}].
	\]
	Note that if $u$ were the real part of an analytic free map, $f$, then by the NC Cauchy-Riemann equations, $Df(X)[H]$ must have this form.
	Next, we observe that $T$ is graded, preserves direct sums and conjugation by unitaries.
	Since $T$ is linear in $H$ it follows that it is a free vector field, moreover Proposition~\ref{prop:analytic and unitary gives 
	similarity} implies $T$ is an analytic free vector field.
	Thus, Theorem~\ref{thm:free-curl free means potential} implies that $T(X,H) = Df(X)[H]$ for some analytic free map $f$ if and only if $T$ 
	is free-curl free.
	
	With the free-curl of $T$ in mind, we note
	\[
		DT(X,H)[K,0] = D^2u(\ora{X})[\ora{H},\ora{K}] - i D^2u(\ora{X})[\wt{H},\ora{K}]
	\]
	and
	\[
		DT(X,K)[H,0] = D^2u(\ora{X})[\ora{K},\ora{H}] - i D^2u(\ora{X})[\wt{K},\ora{H}].
	\]
	Thus, if 
	\begin{equation}
		\label{eq:free curl part 1}
		D^2u(\ora{X})[\ora{H},\ora{K}] = D^2u(\ora{X})[\ora{K},\ora{H}]
	\end{equation}
	and
	\begin{equation}
		\label{eq:free curl part 2}
		D^2u(\ora{X})[\wt{H},\ora{K}] = D^2u(\ora{X})[\wt{K},\ora{H}]
	\end{equation}
	are both true, then it follows that $T$ is free-curl free.
	
	Our hypothesis on $u$ and Hamilton's Theorem~\ref{thm:C2 function result} immediately imply Equation~\eqref{eq:free curl part 1}.
	Next we observe
	\begin{align*}
		D^2u(\ora{X})[\wt{H}, \ora{K}] 
			&= D^2u(\ora{X})[\ora{K},\wt{H}] \text{ by Equation~\eqref{eq:free curl part 1}}\\
			&= -D^2u(\ora{X})[\wt{K},\wt{\wt{H}}] \text{ by NC Laplace equations}\\
			&= D^2u(\ora{X})[\wt{K},\ora{H}].
	\end{align*}
	Hence, $T$ is free-curl free and Theorem~\ref{thm:free-curl free means potential} implies
	\[
		Du(\ora{X})[\ora{H}] - iDu(\ora{X})[\wt{H}] = T(X,H) = Df(X)[H]
	\]
	for an analytic free map $f$ with free domain $\Omega$.
	By virtue of our construction,
	\[
		Du(\ora{X})[\ora{H}] = \Re{Df(X)[H]}
	\]
	so that we can choose $f$ with $u = \Re{f}$.
	Therefore $v = \Im{f}$ is a free pluriharmonic conjugate of $u$.
\end{proof}

\subsection{NC rational derivatives are derivatives of nc rationals}
The titular proposition of this subsection is proved using rational formal power series, so we set about defining the appropriate objects.
Let $\bbx = \set{x_1,\dots, x_g}$ and $\bbh = \set{h_1,\dots, h_g}$ be sets of freely noncommuting indeterminates.
The set $\fralg{\bbx}$ is the free monoid, i.e. the set of all words formed from the letters $x_1,\dots, x_g$.
The free algebra $\C\fralg{\bbx}$ is the set of all finite $\C$-linear combinations of elements of $\fralg{\bbx}$.

A \textit{formal power series} $S$ is a function $S:\fralg{\bbx}\to \C$ and the image of $w$ under $S$ is the \textit{coefficient} of $w$, 
denoted $[S,w]$.
The set of all formal power series (in $\bbx$ over $\C$) is denoted $\C\fpsx$.
For any word $w\in \fralg{\bbx}$ and series $S\in \C\fpsx$, we let $w^{-1}S = \sum_{v\in \fralg{\bbx}} [S,wv]v$ (effectively the backwards 
shift by $w$).

A subset $M$ of $\C\fpsx$ is called \textit{stable} if, for all $S\in M$ and $w\in \fralg{\bbx}$, the series $w^{-1}S\in M$.
A series $S$ is \df{rational} if and only if it is contained in a stable finitely generated left sub-module of $\C\fpsx$.
The set of all rational formal power series in $\C\fpsx$ forms an algebra and is denoted by $\C\skf{\bbx}_0$ (this notation is due to the 
fact that these series can exactly by realized as noncommutative rational functions with $0$ in their domain).
If $S$ is rational, then $w^{-1}S$ is rational for any $w\in \bbx$.

\begin{proposition}
	\label{prop:nc rational deriv is deriv of nc rational}
	Suppose $f\in \C\fpsx$.
	If $Df(\x)[\h]$ is a rational formal power series then $f(\x)$ is as well.
\end{proposition}

\begin{proof}
	For each $1\leq i\leq g$ we let $\partial_i f := Df(\x)[0,\dots,0,h_i,0,\dots,0]$.
	The map $\zeta_i(F(\x)[\h]) = F(\x)[0,\dots, h_i, \dots, 0]$ is an algebra homomorphism and since $Df(\x)[\h]$ is a rational series by 
	hypothesis, it follows that $\partial_i f = \zeta_i(Df(\x)[\h])$ is a rational series as well.
	Consequently, $h_i^{-1}\partial_i f$ is rational.
	Next, we write
	\[
		f = c_0 + \sum_{j=1}^g x_j f_j
	\]
	for some $f_i\in \C\fpsx$.
	Hence,
	\[
		\partial_i f = h_if_i + \sum_{j=1}^g x_j \partial_i f_j\in \C\skf{\bbx,\bbh}_0.
	\]
	As noted above, the rationality of $\partial_i f$ implies that $h_i^{-1}\partial_i f$ is rational as well.
	Since $h_i^{-1}\partial_if_i = f_i$, we have that each $f_i$ is rational.
	Therefore, $f = c_0 + \sum_{i=1}^g x_i f_i$ is a rational formal power series.
\end{proof}

\begin{remark}
	Not every nc rational can be represented as a rational formal power series, however, every nc rational can be represented as a rational 
	generalized series over $M_n(\C)\fpsx$, for some $n$, see \cite{Vol18}. Moreover, the proof of Proposition~\ref{prop:nc rational deriv is 
	deriv of nc rational} works in exactly the same way for generalized series, allowing us to conclude that every nc rational derivative is 
	indeed the derivative of an nc rational.
\end{remark}

\bibliographystyle{alpha}
\bibliography{FreePotentials}

\end{document}